\def\NZQ{\mathbb}               
\def\NN{{\NZQ N}}
\def\QQ{{\NZQ Q}}
\def\ZZ{{\NZQ Z}}
\def\FF{{\NZQ F}}
\def\frk{\mathfrak}               
\def\mm{{\frk m}}
\def\Phi{{\frk N}}
\def\ab{{\bold a}}
\def\bb{{\bold b}}
\def\xb{{\bold x}}
\def\opn#1#2{\def#1{\operatorname{#2}}} 
\opn\chara{char} \opn\length{\ell} \opn\pd{pd} \opn\rk{rk}
\opn\projdim{proj\,dim} \opn\injdim{inj\,dim} \opn\rank{rank}
\opn\depth{depth} \opn\grade{grade} \opn\height{height}
\opn\embdim{emb\,dim} \opn\codim{codim}
\opn\Tr{Tr} \opn\bigrank{big\,rank}
\opn\superheight{superheight}\opn\lcm{lcm}
\opn\trdeg{tr\,deg}
\opn\reg{reg} \opn\lreg{lreg} \opn\ini{in} \opn\lpd{lpd}
\opn\size{size}\opn{\mult}{mult}
\opn\link{lk}\opn\star{st}
\opn\div{div} \opn\Div{Div} \opn\cl{cl} \opn\Cl{Cl}
\opn\Spec{Spec} \opn\Supp{Supp} \opn\supp{supp} \opn\Sing{Sing}
\opn\Ass{Ass} \opn\Min{Min}
\opn\Ann{Ann} \opn\Rad{Rad} \opn\Soc{Soc}
\opn\Syz{Syz} \opn\Im{Im} \opn\Ker{Ker} \opn\Coker{Coker}
\opn\Am{Am} \opn\Hom{Hom} \opn\Tor{Tor} \opn\Ext{Ext}
\opn\End{End} \opn\Aut{Aut} \opn\id{id}
\opn\nat{nat}
\opn\pff{pf}
\opn\Pf{Pf} \opn\GL{GL} \opn\SL{SL} \opn\mod{mod} \opn\ord{ord}
\opn\Gin{Gin}
\opn\Hilb{Hilb}\opn\adeg{adeg}\opn\std{std}\opn\ip{infpt}
\opn\Pol{Pol}
\opn\sat{sat}
\opn\Var{Var}
\opn\aff{aff} \opn\con{conv} \opn\relint{relint} \opn\st{st}
\opn\lk{lk} \opn\cn{cn} \opn\core{core} \opn\vol{vol}
\opn\link{link} \opn\star{star} \opn\car{char}
\opn\gr{gr}
\def\Fc{{\mathcal F}}
\def\pot#1#2{#1[\kern-0.28ex[#2]\kern-0.28ex]}
\opn\dirlim{\underrightarrow{\lim}}
\opn\inivlim{\underleftarrow{\lim}}
\let\sect=\cap
\let\Dirsum=\bigoplus
\let\to=\rightarrow
\def\Implies{\ifmmode\Longrightarrow \else
        \unskip${}\Longrightarrow{}$\ignorespaces\fi}
\def\implies{\ifmmode\Rightarrow \else
        \unskip${}\Rightarrow{}$\ignorespaces\fi}
\def\iff{\ifmmode\Longleftrightarrow \else
        \unskip${}\Longleftrightarrow{}$\ignorespaces\fi}
\newtheorem{Theorem}{Theorem}[section]
\newtheorem{Theorem A}{Theorem}
\newtheorem{Lemma}[Theorem]{Lemma}
\newtheorem{Corollary}[Theorem]{Corollary}
\newtheorem{Proposition}[Theorem]{Proposition}
\newtheorem{Example}[Theorem]{Example}
\newtheorem{Definition}[Theorem]{Definition}
\let\epsilon\varepsilon
\let\phi=\varphi
\let\kappa=\varkappa
\def\qed{\ifhmode\textqed\fi
      \ifmmode\ifinner\quad\qedsymbol\else\dispqed\fi\fi}
\def\textqed{\unskip\nobreak\penalty50
       \hskip2em\hbox{}\nobreak\hfil\qedsymbol
       \parfillskip=0pt \finalhyphendemerits=0}
\def\dispqed{\rlap{\qquad\qedsymbol}}
\opn\dis{dis}
\def\pnt{{\raise0.5mm\hbox{\large\bf.}}}
\opn\Lex{Lex}
\begin{document}

\title{Simplicial complexes with rigid depth}

\author{Adnan Aslam and Viviana Ene}

\address{ Abdus Salam School of Mathematical Sciences (ASSMS), GC University, Lahore, ~Pakistan.
}
\email{adnanaslam15@yahoo.com}

\address{Faculty of Mathematics and Computer Science, Ovidius University, Bd.\ Mamaia 124,
 900527 Constanta, Romania
  \newline \noindent Institute of Mathematics of the Romanian Academy, P.O. Box 1-764, RO-014700, Buchaest, Romania} \email{vivian@univ-ovidius.ro}

\begin{abstract} We extend a result of Minh and Trung \cite{MT} to get criteria for  $\depth I=\depth\sqrt{I}$ where $I$ is an unmixed monomial ideal of the polynomial ring $S=K[x_1,\ldots, x_n]$. As an application we characterize all the pure simplicial complexes $\Delta$ which have rigid depth, that is, which satisfy the condition that  for  every unmixed monomial ideal
$I\subset S$ with $\sqrt{I}=I_\Delta$ one has $\depth(I)=\depth(I_\Delta).$ 
\end{abstract}

\thanks{The second author was supported by the grant UEFISCDI,  PN-II-ID-PCE- 2011-3-1023.}
\subjclass{Primary 13C15, Secondary 13F55,13D45}
\keywords{Monomial ideals, Simplicial complexes, Stanley-Reisner rings, Depth}

\maketitle

\section*{Introduction}

Let $S=K[x_1,\ldots,x_n]$ be the polynomial ring over a field $K$ and $I\subset S$ a monomial ideal. In \cite{HTT}, the authors compare the properties 
of $I$ with the properties of its radical by using the inequality $\beta_i(I)\geq \beta_i(\sqrt{I}).$ In particular, from the inequality between the 
Betti numbers, one gets the inequality $\depth(S/I)\leq \depth(S/\sqrt{I}),$ which implies, for instance, that $S/I$ is Cohen-Macaulay if $S/\sqrt{I}$
is so. In \cite{MT}, the authors presented criteria for the Cohen-Macaulayness of a monomial ideal in terms of its primary decomposition. We extend 
their criteria to characterize the unmixed monomial ideals for which the equality $\depth(S/I)= \depth(S/\sqrt{I})$ holds. We recall that an ideal $I\subset S$ is \textit{unmixed} if the associated prime ideals of $S/I$ are the minimal prime ideals of $I.$

Let $\Delta$ be a pure simplicial complex  with the facet set denoted, as usual, by  $\Fc(\Delta)$, and let $I_{\Delta}=\bigcap_{F\in \Fc(\Delta)}P_{F}$ 
be  its Stanley-Reisner ideal. For any subset $F\subset [n],$ we denoted by $P_F$ the monomial prime ideal generated by the variables $x_i$ 
with $i\notin F$. Let $I\subset S$ be an unmixed monomial ideal such that $\sqrt{I}=I_{\Delta}$ and assume that $I=\bigcap_{F\in \Fc(\Delta)} I_{F}$ where 
$I_{F}$ is the $P_{F}$-primary component of $I.$ Following \cite{MT}, for every $\ab\in \NN^n$, $\ab=(a_1,\ldots,a_n),$ we set $\xb^\ab=x_1^{a_1}\cdots x_n^{a_n}$ and denote by $\Delta_\ab$ the simplicial complex on the set $[n]$ with the facet set $\Fc(\Delta_\ab)=\{F\in \Fc(\Delta)\ |\  \xb^\ab\notin I_F\}$. Moreover, for every simplicial complex $\Gamma$ with $\Fc(\Gamma)\subseteq \Fc(\Delta),$ we set 
\[
L_\Gamma(I)=\{\ab\in \NN^n\ |\  \xb^\ab \in \bigcap_{F\in \Fc(\Delta)\setminus \Fc(\Gamma)}I_F\setminus \bigcup_{G\in \Fc(\Gamma)}I_G\}. 
\]

In Section~\ref{extensionsect}, we prove the following theorem which is a natural extension of Theorem~1.6 in  \cite{MT}.

\begin{Theorem A}
Let $\Delta$ be a pure simplicial complex  with $\depth K[\Delta]=t.$ Let $I\subset S$ be an unmixed monomial ideal with $\sqrt{I}=I_\Delta$. Then the following conditions are equivalent:
\begin{itemize}
	\item [(a)] $\depth(S/I)=\depth(S/\sqrt{I}),$
	\item [(b)] $\depth K[\Delta_\ab]\geq t$ for all $\ab\in \NN^n,$
	\item [(c)] $L_{\Gamma}(I)=\emptyset$ for every simplicial complex $\Gamma$ with $\Fc(\Gamma)\subseteq \Fc(\Delta)$ and $\depth K[\Gamma]<t.$
\end{itemize}
\end{Theorem A}

As a main application of the above theorem we study in Section~\ref{rigid} a  special class of simplicial complexes. We say that  a pure simplicial complex has \textit{rigid depth} if for every unmixed monomial ideal $I\subset S$ with $\sqrt{I}=I_{\Delta}$ one has $\depth(S/I)=\depth(S/I_\Delta).$ 
In Theorem~\ref{rigiddepth} which generalizes \cite[Theorem 3.2]{HTT}, we give necessary and sufficient conditions for $\Delta$ to have rigid depth. In particular, from this characterization, it follows that if a pure simplicial complex has rigid depth over a field of characteristic $0,$ then it has rigid depth over any field. In the last part  we  discuss the behavior of rigid depth  in connection to  the skeletons of the simplicial complex.

\section{Criteria for $\depth(S/I)=\depth(S/\sqrt{I})$} 
\label{extensionsect}

Let $S=K[x_1,\ldots,x_n]$ be the polynomial ring over a field $K$.  Let $I\subset S$ be an unmixed monomial ideal such that $\sqrt{I}=I_{\Delta}$ where $\Delta$ is a pure simplicial complex  with the facet set $\Fc(\Delta).$ Then $I_{\Delta}=\bigcap_{F\in \Fc(\Delta)}P_F$, where $P_F=(x_i\ |\  i\notin F)$ for every $F\in \Fc(\Delta).$ Let $I=\bigcap_{F\in \Fc(\Delta)}I_F$ where $I_F$ is the $P_F$-primary component of $I.$

In order to prove the main result of this section we need to recall some facts from \cite[Section 1]{MT}. For $\ab=(a_1,\ldots,a_n)\in \ZZ^n,$ let
$G_\ab=\{i\ |\  a_i<0\}$. We denote by $\Delta_\ab$ the simplicial complex on $[n]$ of all the sets of the form $F\setminus G_\ab$ where
$G_\ab\subset F\subset [n]$ and such that $F$ satisfies the condition $\xb^\ab\notin IS_F$ where $S_F=S[x_i^{-1}\ |\  i\in F]$.  It is shown in \cite[Section 1]{MT}  that if $\Delta_\ab$ is non-empty, then $\Delta_\ab$ is a pure 
subcomplex of $\Delta$ of $\dim \Delta_\ab=\dim \Delta-|G_\ab|.$ 

For every simplicial subcomplex $\Gamma$ of $\Delta$ with $\Fc(\Gamma)\subset \Fc(\Delta)$ we set 
\[
L_\Gamma(I)=\{\ab\in \NN^n\ |\  \xb^\ab \in \bigcap_{F\in \Fc(\Delta)\setminus \Fc(\Gamma)}I_F\setminus \bigcup_{G\in \Fc(\Gamma)}I_G\}. 
\]

By \cite[Lemma 1.5]{MT}, we have
\begin{equation}\label{ii}\Delta_\ab=\Gamma \textnormal{ if and only if } \ab\in L_\Gamma(I). \end{equation}

For the proof of the next theorem we also need to recall Takayama's formula \cite{T}. For every degree $\ab\in \ZZ^n$ we denote by 
$H_\mm^i(S/I)_\ab$ the $\ab$-component of the $i$th local cohomology module of $S/I$ with respect to the homogeneous maximal ideal of $S.$ For $1\leq j\leq n,$ let $$\rho_j(I)=\max\{\nu_j(u)\ |\  u \textnormal { is a minimal generator of } I\},$$ where  by $\nu_j(u)$ we mean the exponent of the variable $x_j$ in $u$. If $x_j$ does not divide $u,$ then we use the usual convention, $\nu_j(u)=0.$

\begin{Theorem}[Takayama's formula]\label{Takayama}
\[\dim_K H_\mm^i(S/I)_\ab=\left\{
\begin{array}{ll}
	\dim_K \tilde{H}_{i-|G_\ab|-1}(\Delta_\ab, K), & \textnormal{ if } G_\ab\in \Delta \textnormal{ and }\\
	&  a_j< \rho_j(I) \textnormal{ for } 1\leq j\leq n,\\
	0, & \textnormal { else. }
\end{array}\right.
 \]
\end{Theorem}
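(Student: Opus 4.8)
The plan is to derive Takayama's formula from the $\ZZ^n$-graded Čech complex computing $\Coh{\bullet}{S/I}$, following the pattern of Hochster's formula for squarefree monomial ideals. Since $S/I$ is $\ZZ^n$-graded and $\mm=(x_1,\ldots,x_n)$, each module $\Coh{i}{S/I}$ inherits a $\ZZ^n$-grading and is the $i$th cohomology of the Čech complex
\[
0\To S/I\To\Dirsum_{i}(S/I)_{x_i}\To\cdots\To\Dirsum_{|F|=p}(S/I)_{x_F}\To\cdots\To(S/I)_{x_1\cdots x_n}\To 0,
\]
where $x_F=\prod_{i\in F}x_i$ and $(S/I)_{x_F}=S_F/IS_F$. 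Fixing $\ab\in\ZZ^n$ and extracting the $\ab$-component reduces this to a complex of finite-dimensional $K$-vector spaces whose $i$th cohomology is exactly $\Coh{i}{S/I}_\ab$.

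First I would identify the graded pieces. A monomial $\xb^\ab$ lies in $S_F$ precisely when $a_i\geq 0$ for every $i\notin F$, i.e.\ when $G_\ab\subseteq F$, and in that case $((S/I)_{x_F})_\ab$ is one-dimensional if $\xb^\ab\notin IS_F$ and zero otherwise. Hence the degree-$p$ term of the $\ab$-component has one copy of $K$ for each $F$ with $|F|=p$, $G_\ab\subseteq F$, and $\xb^\ab\notin IS_F$; by the description of $\Delta_\ab$ recalled above, these $F$ correspond bijectively to the faces $F\setminus G_\ab$ of $\Delta_\ab$, of dimension $|F|-|G_\ab|-1=p-|G_\ab|-1$. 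The empty face of $\Delta_\ab$ corresponds to $F=G_\ab$, sitting in cohomological degree $|G_\ab|$, which supplies the augmentation term.

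Next I would check that the Čech differential matches the simplicial coboundary up to sign, so that the $\ab$-component is, after the degree shift $p\mapsto p-|G_\ab|-1$, the augmented oriented cochain complex of $\Delta_\ab$ over $K$. Since reduced simplicial cohomology and homology coincide over a field, taking cohomology yields $\Coh{i}{S/I}_\ab\iso\tilde{H}_{i-|G_\ab|-1}(\Delta_\ab,K)$, which is the first case. The two vanishing conditions then have to be isolated. If $G_\ab\notin\Delta$, no facet of $\Delta$ contains $G_\ab$, so $\Delta_\ab$ is void and every graded piece vanishes. If some $a_j\geq\rho_j(I)$, then $a_j\geq 0$ forces $j\notin G_\ab$, and for the monomial $\xb^\ab$ membership in $I$ becomes insensitive to inverting $x_j$ — because $a_j$ already exceeds the largest power of $x_j$ dividing any minimal generator of $I$ — so validity of $F$ is unchanged by adjoining or deleting $j$; consequently $\Delta_\ab$ is a cone with apex $j$.

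I expect the sign bookkeeping in the differential to be routine but unenlightening. The genuine obstacle is establishing the cone phenomenon when $a_j\geq\rho_j(I)$: one must verify that the stabilization of $I$-membership in the $j$th direction makes $\Delta_\ab$ a cone over the vertex $j$ whenever it is nonempty, so that it is contractible and $\tilde{H}_\ast(\Delta_\ab,K)=0$ in all degrees. Once this acyclicity and the sign-compatibility are in place, all three branches of Takayama's formula drop out.
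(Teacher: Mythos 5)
The paper does not prove this statement at all: it is recalled verbatim from Takayama's paper \cite{T} and used as a black box, so there is no internal argument to measure yours against. Your Čech-complex derivation is, however, correct and is essentially the standard proof (Takayama's original argument, patterned on Hochster's formula), so what you have written would serve as a self-contained justification that the paper omits.

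On the two points you flag as the real content: both go through. The identification of the degree-$\ab$ strand of the Čech complex with the augmented oriented cochain complex of $\Delta_\ab$, shifted by $|G_\ab|$, is exactly as in the squarefree case, with $F=G_\ab$ supplying the $(-1)$-dimensional face; the sign compatibility is the same computation as for Hochster's formula. For the cone claim, note that for $\ab$ with $G_\ab\subseteq F$ one has $\xb^\ab\in IS_F$ if and only if some minimal generator $u$ of $I$ satisfies $\nu_i(u)\leq a_i$ for all $i\notin F$; if $a_j\geq \rho_j(I)$ then the inequality at $i=j$ holds automatically for every generator, so the conditions for $F$ and for $F\cup\{j\}$ coincide, and hence $F\setminus G_\ab\in\Delta_\ab$ if and only if $(F\cup\{j\})\setminus G_\ab\in\Delta_\ab$. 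Since $\Delta_\ab$ is closed under passing to subsets (as $S_{F'}\subseteq S_F$ for $F'\subseteq F$), this makes $\Delta_\ab$ a cone with apex $j$ whenever it is nonempty, hence acyclic; this is precisely the observation made in \cite[Section 1]{MT}. The remaining vanishing branch is also fine: if $G_\ab\notin\Delta$ then no facet of $\Delta$ contains any $F\supseteq G_\ab$, so $IS_F=S_F$ (the intersection defining $IS_F$ runs over the facets containing $F$ and is empty) and every graded piece of the complex vanishes. With these verifications supplied, your outline is a complete proof.
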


The next theorem is a  natural extension of \cite[Theorem 1.6]{MT}.

\begin{Theorem}\label{extension}
Let $\Delta$ be a pure simplicial complex  with $\depth K[\Delta]=t.$ Let $I\subset S$ be an unmixed monomial ideal with $\sqrt{I}=I_\Delta$. The  following conditions are equivalent:
\begin{itemize}
	\item [(a)] $\depth (S/I)=t,$
	\item [(b)] $\depth K[\Delta_\ab]\geq t$ for all $\ab\in \NN^n$ with $\Delta_\ab\neq \emptyset,$
	\item [(c)] $L_{\Gamma}(I)=\emptyset$ for every simplicial complex $\Gamma$ with $\Fc(\Gamma)\subseteq \Fc(\Delta)$ and $\depth K[\Gamma]<t.$
\end{itemize}
\end{Theorem}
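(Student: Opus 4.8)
The plan is to use Takayama's formula to translate the homological statement about depth into a combinatorial statement about the reduced homology of the simplicial complexes $\Delta_\ab$, then connect this to the sets $L_\Gamma(I)$ via the dictionary (\ref{ii}). Recall that $\depth(S/I)\leq \depth(S/\sqrt{I})=t$ always holds by the Betti number inequality from \cite{HTT} mentioned in the introduction. Hence proving (a) amounts to showing that $\depth(S/I)\geq t$, i.e. that $H_\mm^i(S/I)=0$ for all $i<t$. By the graded structure of local cohomology, this vanishing is equivalent to $\dim_K H_\mm^i(S/I)_\ab=0$ for every $\ab\in\ZZ^n$ and every $i<t$.

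First I would prove the equivalence of (a) and (b). By Takayama's formula (Theorem~\ref{Takayama}), $\dim_K H_\mm^i(S/I)_\ab=\dim_K\tilde{H}_{i-|G_\ab|-1}(\Delta_\ab,K)$ whenever $G_\ab\in\Delta$ and the bounds $a_j<\rho_j(I)$ hold, and is zero otherwise. The key observation is that $\depth K[\Delta_\ab]\geq t$ is equivalent, via Reisner's criterion applied to $\Delta_\ab$, to the vanishing of the relevant reduced homology groups $\tilde{H}_{j}(\Delta_\ab,K)$ in the range that corresponds to $i<t$. More precisely, since $\Delta_\ab$ is a pure subcomplex of $\Delta$ of dimension $\dim\Delta-|G_\ab|$, the shift by $|G_\ab|$ in the homological index of Takayama's formula exactly matches the shift in dimension, so that $H_\mm^i(S/I)_\ab$ in degrees $i<t$ corresponds to the local cohomology of $K[\Delta_\ab]$ in degrees below $t$. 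Thus (b), which asserts $\depth K[\Delta_\ab]\geq t$ for all $\ab$, is precisely the condition that all these graded pieces vanish for $i<t$, giving (a). Conversely, if some $\Delta_\ab$ had depth $<t$, one would produce a nonzero graded component of $H_\mm^i(S/I)$ for some $i<t$, forcing $\depth(S/I)<t$; here one must check that the numerical constraints $a_j<\rho_j(I)$ can be met by translating $\ab$ within its $L_\Gamma(I)$-class, which is possible because these sets are stable under adding vectors with large entries. I expect the careful bookkeeping of this translation to be the main obstacle, as one needs that every nonempty $\Delta_\ab$ is realized by some $\ab$ satisfying the degree bounds of Takayama's formula.

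Next I would establish the equivalence of (b) and (c). This is the more combinatorial and arguably cleaner direction, relying entirely on the dictionary (\ref{ii}): $\Delta_\ab=\Gamma$ if and only if $\ab\in L_\Gamma(I)$. Condition (b) says that whenever $\Delta_\ab\neq\emptyset$ we have $\depth K[\Delta_\ab]\geq t$; equivalently, no $\ab\in\NN^n$ satisfies $\depth K[\Delta_\ab]<t$. Translating through (\ref{ii}), an $\ab\in\NN^n$ with $\Delta_\ab=\Gamma$ and $\depth K[\Gamma]<t$ exists if and only if $L_\Gamma(I)\neq\emptyset$ for some $\Gamma$ with $\Fc(\Gamma)\subseteq\Fc(\Delta)$ and $\depth K[\Gamma]<t$. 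Therefore the negation of (b) is exactly the negation of (c), which yields the equivalence. The only subtlety is to ensure that every relevant $\Gamma$ arising as some $\Delta_\ab$ has its facet set contained in $\Fc(\Delta)$ and is itself a pure subcomplex, which is guaranteed by the structural result of \cite[Section 1]{MT} recalled before the statement; one should also note that restricting $\ab$ to $\NN^n$ (rather than $\ZZ^n$) forces $G_\ab=\emptyset$, so that the subcomplexes $\Delta_\ab$ in question are genuine subcomplexes of $\Delta$ on the full vertex set, matching the setup of $L_\Gamma(I)$.
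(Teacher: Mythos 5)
Your reduction of (b)\,$\Leftrightarrow$\,(c) to the dictionary (\ref{ii}) is exactly what the paper does and is fine. The gap is in (a)\,$\Leftrightarrow$\,(b). You assert that $\depth K[\Delta_\ab]\geq t$ is equivalent, ``via Reisner's criterion,'' to the vanishing of the global reduced homology groups $\tilde{H}_j(\Delta_\ab;K)$ in an appropriate range. That equivalence is false: depth, like Cohen--Macaulayness, is detected by Hochster's formula through the reduced homology of the links $\link_{\Delta_\ab}\sigma$ of \emph{all} faces $\sigma$, not just of the empty face. For instance, two triangles glued at a vertex have $\tilde{H}_0=\tilde{H}_1=0$ but depth $2$, because the link of the common vertex is disconnected. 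So your argument would wrongly conclude that such a $\Delta_\ab$ has depth $\geq 3$.

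Consequently you never actually match the two sides. Condition (a), via Takayama's formula, is governed by the groups $\tilde{H}_{i-|G_\bb|-1}(\Delta_\bb;K)$ for \emph{all} $\bb\in\ZZ^n$, including $\bb$ with negative entries; condition (b), via Hochster's formula, is governed by the groups $\tilde{H}_{i-|G_\bb|-1}(\link_{\Delta_\ab}G_\bb;K)$ for $\ab\in\NN^n$ and faces $G_\bb$ of $\Delta_\ab$ (after discarding the degrees $\bb$ with $H_\bb\neq\emptyset$, where the relevant complex is a star and hence acyclic). The missing idea --- and the crux of the paper's proof, imported from the proof of (i)\,$\Rightarrow$\,(ii) in \cite[Theorem 1.6]{MT} --- is the identity $\link_{\Delta_\ab}G_\bb=\Delta_\bb$ for $G_\bb\in\Delta_\ab$, which puts these two families of homology groups in bijection. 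Your ``the shift by $|G_\ab|$ matches the shift in dimension'' remark gestures at a numerical coincidence of indices but does not identify the complexes whose homology is being compared, and without that identification the equivalence does not follow. (Your side remark about arranging $a_j<\rho_j(I)$ by translating within an $L_\Gamma(I)$-class is a legitimate point of care, but it does not repair this central omission.)
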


\begin{proof} The proof of this theorem follows closely the ideas of the proof of \cite[Theorem~1.6]{MT}.
For the equivalence $(a)\Leftrightarrow (b)$ we need to recall some known facts about  local cohomology; see 
\cite[Section A. 7]{HH}. For any finitely generated graded $S$-module $M$ we have $\depth M\geq t$ if and only if $H_\mm^i(M)=0$ for all $i<t.$ Therefore, in our hypothesis, and since $\depth(S/I)\leq \depth(S/\sqrt{I})=t,$ we get
\begin{equation}\label{eq1}
\depth(S/I)=t \Leftrightarrow H_\mm^i(S/I)=0 \text{ for } i<t.
\end{equation}

In addition, for every $\ab\in \NN^n$, we get 
\begin{equation}\label{eq2}
\depth(K[\Delta_\ab])\geq t \Leftrightarrow H_\mm^i(K[\Delta_\ab])=0 \text{ for } i<t.
\end{equation}

For $\bb\in \ZZ^n,$ we set $G_\bb=\{i\ | \ b_i<0\}$ and $H_\bb=\{i\ |\ b_i>0\}$. By using \cite[Theorem A.7.3]{HH}, for every $\bb\in \ZZ^n,$ we obtain
\[
\dim_K H_\mm^i(K[\Delta_\ab])_\bb=\dim_K\tilde{H}_{i-|G_\bb|-1}(\link_{\star H_\bb}G_\bb;K).
\]
 Here we denoted by $\star H_\bb$ the star of $H_\bb$ in 
$\Delta_\ab$,  and by $\link_{\star H_\bb}G_\bb$ the link of $G_\bb$ in the complex $\star H_\bb$. We recall that if $\Gamma$ is a simplicial complex and $F$ is a face of $\Gamma,$ then $\star_{\Gamma}F=\{G\ |\ F\cup G\in \Gamma\}$ and $\link_\Gamma F=\{G\ |\ F\cup G\in \Gamma \text{ and }F\cap G=\emptyset\}.$ Therefore, the equivalence (\ref{eq2}) my be written
\begin{equation}\nonumber
\depth K[\Delta_\ab]\geq t
\end{equation}
\begin{equation}\label{eq3}
 \Leftrightarrow \tilde{H}_{i-|G_\bb|-1}(\link_{\star H_\bb}G_\bb;K)=0 \text{ for } i<t \text{ and for every }\bb\in\ZZ^n.
\end{equation}

Since $\link_{\star H_\bb}G_\bb$ is acyclic for $H_\bb \neq \emptyset$ and  $\star H_\bb=\Delta_\ab$ if $H_\bb=\emptyset$,  we get
\begin{equation}\nonumber
\depth K[\Delta_\ab]\geq t
\end{equation}
\begin{equation}\label{eq4}
 \Leftrightarrow \tilde{H}_{i-|G_\bb|-1}(\link_{\Delta_\ab}G_\bb;K)=0 \text{ for } i<t \text{ and for every }\bb\in\ZZ^n.
\end{equation}

By Takayama's formula, the equivalence (\ref{eq1}) may be rewritten
\begin{equation}\nonumber
\depth(S/I)=t 
\end{equation}
\begin{equation}\label{eq5}
\Leftrightarrow \dim_K\tilde{H}_{i-|G_\bb|-1}(\Delta_\bb;K)=0 \text{ for } i<t \text{ and for every }\bb\in\ZZ^n.
\end{equation}

Now, the equivalence (a)$\Leftrightarrow$ (b) follows by  relations (\ref{eq4}) and (\ref{eq5}) if we notice that, by the proof of
(i) $\Rightarrow$ (ii) in \cite[Theorem 1.6]{MT}, we have $\link_{\Delta_\ab}G_\bb=\Delta_\bb$ for any $G_\bb \in \Delta_\ab.$\\

For the rest of the proof we only need to use (\ref{ii}). Indeed, for (b) $\Rightarrow$ (c), let us assume that $L_\Gamma(I)\neq\emptyset$ for some subcomplex $\Gamma$ of $\Delta$ with $\Fc(\Gamma)\subset \Fc(\Delta)$ and such that $\depth(K[\Gamma])<t.$ Then there exists $\ab\in\L_\Gamma(I)$,
 hence $\Gamma=\Delta_\ab.$ But this equality is impossible since $\depth(K[\Delta_\ab])\geq t.$ For (c) $\Rightarrow$ (b), let us assume that there 
 exists $\ab\in \NN^n$ such that $\depth K[\Delta_\ab]<t$. Then, for $\Gamma=\Delta_\ab$ we get $L_\Gamma(I)\neq\emptyset$, a contradiction. 
\end{proof}

Obviously, for $t=\dim K[\Delta]$ in the above theorem we recover Theorem 1.6 in \cite{MT}.\\

The above theorem is especially useful in the situation when $I$ is either an intersection of monomial prime ideal powers or an intersection of irreducible monomial ideals. The first class of ideals may be studied with completely similar arguments to those used in \cite[Section 1]{MT}. In the sequel we discuss ideals which are intersections of irreducible monomial ideals.

Let $\Fc(\Delta)=\{F_1,\ldots,F_r\}$ and $I=\bigcap_{i=1}^r I_{F_i}$ be an intersection of irreducible monomial ideals, that is, for every $1\leq i\leq r,$ 
$I_{F_i}=(x_j^{a_{ij}}\ |\ j\not\in F_i)$ for some positive exponents $a_{ij}$. As a consequence of the above theorem, one may express the condition 
$\depth(S/I)=\depth(S/\sqrt{I})$ in terms of linear inequalities on the exponents $a_{ij}.$

\begin{Proposition}\label{inequalities}
The set of exponents $(a_{ij})$ for which the equality $\depth(S/I)=\depth(S/\sqrt{I})$ holds consists of all points of positive integer coordinates in a  finite union of rational cones in ${\mathbb{R}}^{r(n-d)}$.
\end{Proposition}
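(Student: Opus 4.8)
The plan is to translate the cohomological criterion of Theorem~\ref{extension} into combinatorics and then into linear algebra. Set $t=\depth K[\Delta]$ and use the equivalence (a)$\Leftrightarrow$(c): the equality $\depth(S/I)=\depth(S/\sqrt{I})$ holds precisely when $L_\Gamma(I)=\emptyset$ for every simplicial complex $\Gamma$ with $\Fc(\Gamma)\subseteq\Fc(\Delta)$ and $\depth K[\Gamma]<t$. Since $\Delta$ is pure, its facets $F_1,\dots,F_r$ are pairwise incomparable, so every subset of $\{F_1,\dots,F_r\}$ is the facet set of such a $\Gamma$; there are thus only finitely many complexes to test, and I call \emph{bad} those with $\depth K[\Gamma]<t$. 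It therefore suffices to prove that, for each fixed bad $\Gamma$, the locus of exponent vectors $(a_{ij})$ with $L_\Gamma(I)=\emptyset$ is cut out by a positive Boolean combination (using only $\wedge$ and $\vee$) of \emph{homogeneous} rational linear inequalities: intersecting over the finitely many bad $\Gamma$ and rewriting the result in disjunctive normal form will then display the good locus as a finite union of rational cones in the space $\RR^{r(n-d)}$ of exponent vectors, where each facet $F_i$ has $d$ elements and hence contributes $n-d$ exponents $a_{ij}$ with $j\notin F_i$.

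Now I make $L_\Gamma(I)\ne\emptyset$ explicit. Because $I_{F_i}=(x_j^{a_{ij}}\mid j\notin F_i)$ is irreducible, for $\bb=(b_1,\dots,b_n)\in\NN^n$ one has $\xb^{\bb}\in I_{F_i}$ iff $b_j\ge a_{ij}$ for some $j\notin F_i$, and $\xb^{\bb}\notin I_{F_i}$ iff $b_j<a_{ij}$ for all $j\notin F_i$. Hence $\bb\in L_\Gamma(I)$ means: (A) for every $F_i\in\Fc(\Delta)\setminus\Fc(\Gamma)$ there is $j\notin F_i$ with $b_j\ge a_{ij}$; and (B) for every $F_i\in\Fc(\Gamma)$ and every $k\notin F_i$ one has $b_k\le a_{ik}-1$. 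Condition (B) confines $\bb$ to the box $\prod_k[0,U_k-1]$, where $U_k=\min\{a_{ik}\mid F_i\in\Fc(\Gamma),\ k\notin F_i\}$ (and $U_k=\infty$ if no such $i$ exists); this box is nonempty because the $a_{ij}$ are positive. As condition (A) is monotone increasing in $\bb$, the box meets (A) if and only if its top corner does, which gives the criterion
\[
L_\Gamma(I)\ne\emptyset \iff \text{for every } F_i\in\Fc(\Delta)\setminus\Fc(\Gamma) \text{ there is } j\notin F_i \text{ with } a_{ij}<U_j .
\]
This monotonicity reduction, together with the bookkeeping of the coordinates where $U_j=\infty$, is the main obstacle of the argument; everything afterwards is formal.

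Finally I negate. Since $a_{ij}<U_j=\min_{i'}a_{i'j}$ fails exactly when some $F_{i'}\in\Fc(\Gamma)$ with $j\notin F_{i'}$ satisfies $a_{i'j}\le a_{ij}$, the criterion negates to
\[
L_\Gamma(I)=\emptyset \iff \bigvee_{F_i\in\Fc(\Delta)\setminus\Fc(\Gamma)}\ \bigwedge_{j\notin F_i}\ \bigvee_{\substack{F_{i'}\in\Fc(\Gamma)\\ j\notin F_{i'}}}\big(a_{i'j}\le a_{ij}\big),
\]
with the conventions that an empty conjunction is true and an empty disjunction is false (the empty inner disjunction records exactly the case $U_j=\infty$, which obstructs $L_\Gamma(I)=\emptyset$). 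Every atomic predicate here is the homogeneous inequality $a_{ij}-a_{i'j}\ge 0$, so $\{L_\Gamma(I)=\emptyset\}$, and hence also $\bigcap_{\Gamma\ \mathrm{bad}}\{L_\Gamma(I)=\emptyset\}$, is a positive Boolean combination of homogeneous rational half-spaces. Expanding into disjunctive normal form writes this intersection as a finite union of sets each defined by finitely many inequalities $a_{ij}-a_{i'j}\ge 0$, that is, as a finite union of rational polyhedral cones; the exponent vectors for which $\depth(S/I)=\depth(S/\sqrt{I})$ are precisely the positive integer points of this union.
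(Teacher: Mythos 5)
Your proof is correct and follows essentially the same route as the paper's: both apply Theorem~\ref{extension}(c), reduce to the finitely many subcomplexes $\Gamma$ with $\depth K[\Gamma]<t$, and express each condition $L_\Gamma(I)=\emptyset$ as a positive Boolean combination of homogeneous inequalities $a_{ij}\geq a_{i'j}$, hence a finite union of rational cones. The only difference is in one step and is cosmetic: the paper derives the criterion by testing the lcm generators of $\bigcap_{F\notin\Fc(\Gamma)}I_F$ for membership in $\bigcup_{G\in\Fc(\Gamma)}I_G$, whereas you analyze the lattice points of $L_\Gamma(I)$ directly via monotonicity on a box; the two resulting conditions are Boolean-equivalent, your disjunction over facets being the distributive rewriting of the paper's conjunction over tuples $(j_1,\ldots,j_s)$.
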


\begin{proof}
Let $\Gamma$ be a subcomplex of $\Delta$ with $\depth(K[\Gamma])<t$ and $\Fc(\Delta)\setminus \Fc(\Gamma)=\{F_{i_1},\ldots,F_{i_s}\}$ where
$1\leq i_1< \cdots < i_s\leq r$. The condition $L_{\Gamma}(I)=\emptyset$ gives
\[
\bigcap_{q=1}^s(x_j^{a_{{i_q}j}}: j\notin F_{i_q})\subseteq \bigcup_{k\notin \{i_1,\ldots,i_s\}} I_{F_k}.
\]
This implies that the following conditions must hold
\[
\lcm(x_{j_1}^{a_{i_1j_1}},x_{j_2}^{a_{i_2j_2}},\ldots ,x_{j_s}^{a_{i_sj_s}})\in \bigcup_{k\notin \{i_1,\ldots,i_s\}} I_{F_k}
\]
for all $s$-tuples $(j_1,j_2,\ldots,j_s)$, with $j_q \notin F_{i_q}$ for $1\leq q \leq s$. This is equivalent to saying that
for every $s$-tuple $(j_1,j_2,\ldots,j_s)$, with $j_q \notin F_{i_q}$ for $1\leq q \leq s$, there exists $1\leq q \leq s$ such that
\[
 a_{i_qj_q}\geq \min \{a_{kj_q}: k \neq i_1,i_2,\ldots,i_s,\}.
\]
\end{proof}

In the following example we consider tetrahedral type ideals.

\begin{Example}{\em
Let $\Delta$ be the $4$-cycle, that is, $I_\Delta=(x_1,x_2)\sect(x_1,x_4)\sect (x_2,x_3)\sect (x_3,x_4).$
 Note that $S/I_\Delta$ is Cohen-Macaulay, hence $\depth(S/I_\Delta)=2.$



Let $I=(x_1^{a_1},x_2^{a_2})\sect (x_1^{a_3},x_4^{a_4})\sect (x_2^{a_5},x_3^{a_6})\sect (x_3^{a_7},x_4^{a_{8}})$.
Then  $\depth(S/I)=\depth(S/I_\Delta)$, that is, $I$ is a Cohen-Macaulay ideal, if and only if
one of the following condition holds:
\begin{enumerate}
\item[{ (1)}] $a_{3} \leq a_{1},\quad a_{2}=a_{5},\quad a_{7}\leq a_{6}$.
\item[{ (2)}] $a_{2} \leq a_{5},\quad a_{6}=a_{7},\quad a_{4}\leq a_{8}$.
\item[{ (3)}] $a_{5} \leq a_{2},\quad a_{1}=a_{3},\quad a_{8}\leq a_{4}$.
\item[{ (4)}] $a_{1} \leq a_{3},\quad a_{4}=a_{8},\quad a_{6}\leq a_{7}$.
\end{enumerate}

 In order to prove the above claim, we first notice that any subcomplex  $\Gamma$ of $\Delta$ which has $\depth(K[\Gamma])<2$ corresponds to a  disconnected subgraph of $\Delta$. But $\Delta$ has two disconnected subgraphs which correspond to the pair of disjoint edges $\big\{\{1,2\},\{3,4\}\big\}$ and $\big\{\{1,4\},\{2,3\}\big\}$. Let $\Gamma$ be the subgraph $\big\{\{1,2\},\{3,4\}\big\}$. Then the inequalities of the proof of Proposition~\ref{inequalities} give
 \[
 (a_1\leq a_3 \text{ or } a_2\leq a_5) \text{ and } (a_1\leq a_3 \text{ or }a_7\leq a_6)
 \]
 \[\text{ and }(a_8\leq a_4 \text{ or }a_2\leq a_5)
 \text{ and } (a_8\leq a_4 \text{ or } a_7\leq a_6 ),
 \]
which  is equivalent to
\begin{equation}\label{cond1}
 (a_1\leq a_3 \text{ and } a_8 \leq a_4) \quad \text{or} \quad (a_2\leq a_5 \text{ and }  a_7 \leq a_6).
\end{equation}

Now we consider the other disconnected subgraph which corresponds to the pair of disjoint edges $\big\{\{1,4\},\{2,3\}\big\}$ and get, similarly,
\begin{equation}\label{cond2}
(a_3\leq a_1 \text{ and } a_5 \leq a_2) \quad \text{or} \quad (a_6\leq a_7 \text{ and } a_4 \leq a_8).
\end{equation}
By intersecting conditions (\ref{cond1}) and (\ref{cond2}), we get the desired relations.

Note that in this example the union of the four rational cones defined by the set of the linear inequalities $(1)-(4)$ is not a convex set. Indeed, if we take the exponent vectors  $\ab=(3,5,1,3,5,9,7,9)$ and $\ab^\prime=(1,3,1,1,7,11,11,1)$, then the corresponding ideals are both Cohen-Macaulay. However, for the vector $\bb=(\ab+\ab^\prime)/2=(2,4,1,2,6,10,9,5)$,  the corresponding ideal is not Cohen-Macaulay.
}
\end{Example}

\section{Rigid depth}
\label{rigid}

\begin{Definition} {\em 
Let $\Delta$ be a pure simplicial complex. We say that $\Delta$ has \textit{rigid depth} if for every unmixed monomial  ideal
$I\subset S$ with $\sqrt{I}=I_\Delta$ one has $\depth(S/I)=\depth(S/I_\Delta).$ }
\end{Definition}

For example, any pure simplicial complex $\Delta$ with $\depth(K[\Delta])=1$ has rigid depth. In this section we characterize all the pure simplicial complexes which have rigid depth. 

In the next theorem we will use the formula given in the following proposition for computing the depth of a Stanley-Reisner ring. We recall that the $i$th skeleton of a simplicial complex $\Delta$ is defined as  $\Delta^{(i)}=\{F\in \Delta\ |\ \dim F\leq i\}.$

\begin{Proposition}\cite{H}\label{Hibi}
Let $\Delta$ be a simplicial complex of dimension $d-1$. Then:
\[
\depth(K[\Delta])=\max\{i\ |\ \Delta^{(i)} \text{ is Cohen-Macaulay}\}+1.
\]

\end{Proposition}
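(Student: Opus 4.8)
The plan is to reduce the whole statement to a single description of depth through the vanishing of reduced simplicial homology of links, and then to recognise the Cohen--Macaulayness of each skeleton as an instance of that same description. Throughout I write $|G|=\dim G+1$ for a face $G$.

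First I would record the homological characterisation of depth. By \cite[Section A.7]{HH} one has $\depth K[\Delta]\geq t$ if and only if $H_\mm^i(K[\Delta])=0$ for all $i<t$, exactly as in the proof of Theorem~\ref{extension}. By Hochster's formula \cite[Theorem A.7.3]{HH}, the module $H_\mm^i(K[\Delta])$ is nonzero only in degrees $\bb$ with nonpositive entries whose support is a face $G\in\Delta$, and there $\dim_K H_\mm^i(K[\Delta])_\bb=\dim_K\tilde H_{i-|G|-1}(\link_\Delta G;K)$. Combining these and reindexing by $k=i-|G|-1$ yields the equivalence
\[
\depth K[\Delta]\geq t \iff \tilde H_k(\link_\Delta G;K)=0 \text{ for all } G\in\Delta \text{ and all } k<t-|G|-1.
\]

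Next I would apply this very equivalence to each skeleton $\Delta^{(i)}$. Since $\dim\Delta^{(i)}=i$, the complex $\Delta^{(i)}$ is Cohen--Macaulay precisely when $\depth K[\Delta^{(i)}]\geq i+1$, which by the displayed equivalence (applied to $\Delta^{(i)}$) means $\tilde H_k(\link_{\Delta^{(i)}}G;K)=0$ for all $G\in\Delta^{(i)}$ and all $k<i-|G|$. Two elementary lemmas then transfer this to $\Delta$ itself: (i) for a face $G\in\Delta^{(i)}$ one has $\link_{\Delta^{(i)}}G=(\link_\Delta G)^{(i-|G|)}$, which is immediate from $|G\cup H|=|G|+|H|$ for disjoint faces; and (ii) for any complex $\Gamma$ and any $k<i$ the augmented chain complexes of $\Gamma^{(i)}$ and $\Gamma$ coincide through homological degree $i$, so that $\tilde H_k(\Gamma^{(i)};K)\cong\tilde H_k(\Gamma;K)$. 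As $k$ ranges only over $k<i-|G|$, lemma (ii) applies with $\Gamma=\link_\Delta G$ and cutoff $i-|G|$, and together with lemma (i) we obtain
\[
\Delta^{(i)} \text{ is Cohen--Macaulay} \iff \tilde H_k(\link_\Delta G;K)=0 \text{ for all } G\in\Delta \text{ and all } k<i-|G|,
\]
where faces with $|G|>i+1$ may be included since for them $k<i-|G|$ forces $k\leq-2$ and imposes no condition.

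Comparing the two displays, the condition $\depth K[\Delta]\geq t$, which asks for vanishing in degrees $k<t-|G|-1=(t-1)-|G|$, is literally the condition that $\Delta^{(t-1)}$ be Cohen--Macaulay. These conditions weaken as the skeleton dimension decreases, so $\Delta^{(i)}$ Cohen--Macaulay implies $\Delta^{(i-1)}$ Cohen--Macaulay, and the set of indices $i$ with $\Delta^{(i)}$ Cohen--Macaulay is an initial segment; hence $m:=\max\{i:\Delta^{(i)}\text{ is Cohen--Macaulay}\}$ is attained. Therefore $\depth K[\Delta]\geq t$ holds if and only if $t-1\leq m$, i.e. $t\leq m+1$, and reading off the maximal such $t$ gives $\depth K[\Delta]=m+1$, as claimed.

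I expect the main obstacle to be the careful bookkeeping in the first step: pinning down exactly which graded components of $H_\mm^i(K[\Delta])$ survive, verifying the index shift $k=i-|G|-1$, and handling the degenerate cases — the empty face, the top skeleton $\Delta^{(d-1)}=\Delta$ (so that $m=d-1$ corresponds to $\Delta$ itself being Cohen--Macaulay), and the vacuous conditions arising from faces $G$ that are too large or from negative homological degrees. Once the two displayed equivalences are established, the final identification is purely formal.
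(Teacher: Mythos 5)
Your proof is correct. Note that the paper itself gives no argument for this proposition --- it is quoted from Hibi's article \cite{H} as a known result --- so there is no in-paper proof to compare against; what you have written is a self-contained re-derivation. Your route is the standard modern one: Hochster's formula turns $\depth K[\Delta]\ge t$ into the vanishing of $\tilde H_k(\link_\Delta G;K)$ for $k<t-|G|-1$, and your two lemmas ($\link_{\Delta^{(i)}}G=(\link_\Delta G)^{(i-|G|)}$ and the stability of $\tilde H_k$ under passing to the $i$-skeleton for $k<i$) identify the Reisner criterion for $\Delta^{(t-1)}$ with exactly that vanishing condition; the index bookkeeping, the monotonicity of the conditions in $i$, and the dismissal of faces with $|G|>i+1$ are all handled correctly. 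Hibi's original proof proceeds somewhat differently, analyzing the local cohomology of the quotient algebras $K[\Delta^{(i)}]$ of the Stanley--Reisner ring directly rather than reducing everything to one uniform link-vanishing statement, but the content is the same and your version is arguably cleaner: the single equivalence ``$\depth K[\Delta]\ge t$ iff $\Delta^{(t-1)}$ is Cohen--Macaulay'' immediately yields the claimed formula. The one point worth making explicit if you write this up is the degenerate case ensuring the maximum is attained over a nonempty set (e.g.\ $\Delta^{(0)}$ is Cohen--Macaulay whenever $\Delta$ has a vertex), which you implicitly use when ``reading off the maximal $t$.''
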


The following theorem generalizes \cite[Theorem 3.2]{HTT}.

\begin{Theorem}\label{rigiddepth}
Let $\Delta$ be a pure simplicial complex  with $\depth(K[\Delta])=t$ and $I_\Delta=\bigcap_{F\in \Fc(\Delta)}P_F.$ The following statements are equivalent:
\begin{itemize}
	\item [(a)] $\Delta$ has rigid depth.
	\item [(b)] $\depth(S/I)=t$ for every ideal $I=\bigcap_{F\in \Fc(\Delta)}I_F$ where $I_F$ are irreducible monomial ideals with $\sqrt{I_F}=P_F$ for all $F\in \Fc(\Delta)$.
	\item [(c)] $\depth(S/I)=t$ for every ideal $I=\bigcap_{F\in \Fc(\Delta)}P^{m_F}_F$ where $m_F$ are positive integers.
	\item [(d)] $\depth(K[\Gamma])\geq t$ for every subcomplex $\Gamma$ of $\Delta$ with $\Fc(\Gamma)\subset \Fc(\Delta)$.
	\item [(e)] For every subcomplex $\Gamma$ of $\Delta$ with $\Fc(\Gamma)\subset \Fc(\Delta)$, the skeleton $\Gamma^{(t-1)}$ is Cohen-Macaulay. 
	\item [(f)] Let $\Fc(\Delta)=\{F_1,\ldots, F_r\}$. Then, for every $1\leq k\leq \min\{r,t\}$ and for any indices  $1\leq i_1<\cdots <i_k\leq r$,  we have $|F_{i_1}\cap\cdots\cap F_{i_k}|\geq t-k+1.$
\end{itemize}
\end{Theorem}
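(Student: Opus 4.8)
The plan is to establish the equivalences in Theorem~\ref{rigiddepth} by combining Theorem~\ref{extension} with the skeleton depth formula of Proposition~\ref{Hibi}, and then to translate the resulting combinatorial condition on subcomplexes into the explicit intersection condition (f). First I would dispatch the implications $(a)\Rightarrow(b)\Rightarrow(c)$ and $(a)\Rightarrow(c)$, which are essentially immediate: (b) and (c) are simply the rigid depth condition restricted to the particular families of unmixed ideals $I=\bigcap I_F$ with $I_F$ irreducible, respectively $I=\bigcap P_F^{m_F}$ primary powers, so each follows by specialization from (a). The substance of the theorem lies in closing the loop, and the natural hub is condition (d), the statement that every proper facet-subcomplex $\Gamma$ of $\Delta$ has $\depth K[\Gamma]\geq t$.

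Next I would prove $(d)\Leftrightarrow(a)$ and $(d)\Leftrightarrow$ (the specialized versions (b),(c)) using Theorem~\ref{extension}. In that theorem, condition (c) asserts that $L_\Gamma(I)=\emptyset$ for every subcomplex $\Gamma$ with $\depth K[\Gamma]<t$. If our (d) holds, then there is no subcomplex $\Gamma$ with $\Fc(\Gamma)\subset\Fc(\Delta)$ and $\depth K[\Gamma]<t$ at all, so the emptiness condition is vacuously satisfied for every unmixed $I$, giving (a) via Theorem~\ref{extension}. Conversely, if (d) fails, there is a proper subcomplex $\Gamma$ with $\depth K[\Gamma]<t$; here I would exhibit an explicit unmixed ideal $I$ (in fact one of the special form in (b) or (c)) for which $L_\Gamma(I)\neq\emptyset$, so that Theorem~\ref{extension} forces $\depth(S/I)<t$, violating rigid depth. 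The construction of such an $I$ is the one genuinely delicate point: I would choose the primary components $I_F$ so that a suitable monomial $\xb^\ab$ lies in $\bigcap_{F\notin\Fc(\Gamma)}I_F$ but avoids every $I_G$ with $G\in\Fc(\Gamma)$, for instance by taking large exponents on the components outside $\Gamma$ relative to those inside, thereby witnessing $\Delta_\ab=\Gamma$. Since this witness can be realized with irreducible components and also with prime powers, the same argument simultaneously yields $(b)\Rightarrow(d)$ and $(c)\Rightarrow(d)$, completing the cycle among $(a)$--$(d)$.

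The equivalence $(d)\Leftrightarrow(e)$ is then a direct application of Proposition~\ref{Hibi}. For a pure complex $\Gamma$ of the same dimension $d-1$ as $\Delta$, Hibi's formula reads $\depth K[\Gamma]=\max\{i:\Gamma^{(i)}\text{ is Cohen-Macaulay}\}+1$, so $\depth K[\Gamma]\geq t$ holds exactly when $\Gamma^{(t-1)}$ is Cohen-Macaulay; I would verify that every proper subcomplex $\Gamma$ appearing in (d) is pure (it is a union of facets of $\Delta$) so that the formula applies cleanly and (d) and (e) say the same thing facet-subcomplex by facet-subcomplex.

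Finally, for $(e)\Leftrightarrow(f)$ I would reduce the Cohen-Macaulayness of the skeletons $\Gamma^{(t-1)}$ to a purely combinatorial intersection condition. The key observation is that the obstruction to $\depth K[\Gamma]\geq t$ is disconnectedness in low dimension, and more precisely a failure of a $(t-1)$-dimensional skeleton to be connected in codimension. I would argue that the worst subcomplexes $\Gamma$ are those generated by a single subcollection $\{F_{i_1},\dots,F_{i_k}\}$ of facets, and that the relevant Cohen-Macaulay defect is detected by the size of the common intersection $F_{i_1}\cap\cdots\cap F_{i_k}$. Concretely, a small intersection forces the link of that common face in the skeleton to be disconnected or to have nonvanishing reduced homology below degree $t-1$, which by Reisner's criterion destroys Cohen-Macaulayness; the numerical threshold works out to $|F_{i_1}\cap\cdots\cap F_{i_k}|\geq t-k+1$ for all $k\leq\min\{r,t\}$ and all index choices. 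I expect this last equivalence to be the main obstacle, since it requires carefully identifying which faces and links control the Cohen-Macaulay property of the skeletons and checking that the arithmetic of dimensions produces exactly the bound $t-k+1$; here I would lean on the combinatorial description of subcomplexes as unions of facets and on a Reisner-type analysis of reduced homology of links to pin down both directions of the equivalence.
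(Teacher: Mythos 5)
Your overall architecture --- making (d) the hub, deducing (d) $\Rightarrow$ (a) from Theorem~\ref{extension} because its condition (c) becomes vacuous when no subcomplex has small depth, and getting (d) $\Leftrightarrow$ (e) from Proposition~\ref{Hibi} --- matches the paper. Your route from (b) and (c) back to (d) is genuinely different: the paper proves (b) $\Rightarrow$ (d) by polarizing the ideal with components $I_F=(x_i^2\mid i\notin F)$ for $F\in \Fc(\Gamma)$ and $I_F=P_F$ otherwise and then localizing at the $x_i$'s, and it handles (c) $\Rightarrow$ (e) separately by the argument of \cite[Corollary 1.9]{MT}; your plan of exhibiting a witness $\ab$ with $\Delta_\ab=\Gamma$ and invoking Theorem~\ref{extension} is viable and treats (b) and (c) uniformly. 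One sign error there: to get $\xb^\ab\in\bigcap_{F\notin\Fc(\Gamma)}I_F\setminus\bigcup_{G\in\Fc(\Gamma)}I_G$ you need \emph{small} exponents in the components outside $\Gamma$ (e.g.\ $I_F=P_F$) and \emph{large} ones inside (e.g.\ $I_G=(x_i^2\mid i\notin G)$, with $\ab=(1,\dots,1)$) --- the opposite of what you wrote.

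The genuine gap is the equivalence with (f). You defer it to ``a Reisner-type analysis of reduced homology of links'' in the skeletons $\Gamma^{(t-1)}$, but what you offer is a restatement of the conclusion (``the relevant Cohen--Macaulay defect is detected by the size of the common intersection'') rather than an argument; it is not clear how one would control the homology of \emph{all} links in \emph{all} $(t-1)$-skeletons of \emph{all} facet-subcomplexes directly in terms of the cardinalities $|F_{i_1}\cap\cdots\cap F_{i_k}|$, in either direction. The paper instead proves (d) $\Leftrightarrow$ (f) algebraically: it takes the exact sequence of \cite[Theorem 1.1]{HPV}
\[
0\to S/\textstyle\bigcap_{i=1}^k P_{F_i}\to\bigoplus_{i} S/P_{F_i}\to\bigoplus_{i<j}S/(P_{F_i}+P_{F_j})\to\cdots\to S/(P_{F_1}+\cdots+P_{F_k})\to 0,
\]
uses the identity $P_{F_{j_1}}+\cdots+P_{F_{j_\ell}}=P_{F_{j_1}\cap\cdots\cap F_{j_\ell}}$ so that every term past the first two is Cohen--Macaulay of depth equal to an intersection cardinality, splits the long sequence into short exact sequences, and chases the Depth Lemma through them --- downward, with an induction on $k$, for (d) $\Rightarrow$ (f), and upward for (f) $\Rightarrow$ (d). Some mechanism of this kind, converting depth bounds on $S/\bigcap P_{F_i}$ into bounds on intersection cardinalities and back, is exactly what is missing from your sketch of the hardest equivalence.
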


\begin{proof}
(a) $\Rightarrow$ (b) and (a) $\Rightarrow$ (c) are trivial.

(b) $\Rightarrow$ (d): Let $\Gamma$ be a subcompex of $\Delta$ with $\Fc(\Gamma)\subset\Fc(\Delta)$. We have to show that $\depth(K[\Gamma])\geq t.$
For every $F\in \Fc(\Gamma)$, let $I_F=(x_i^2\ |\ i\notin F)$, and for every $F\in \Fc(\Delta)\setminus\Fc(\Gamma)$ let $I_F=P_F=(x_i\ |\ i\notin F)$.
Let $I=\bigcap_{F\in \Fc(\Delta)}I_F$. By assumption, $\depth(S/I)=t$. Let $S^\prime\subset K[x_1,\ldots,x_n,y_1,\ldots,y_n]$ be the polynomial ring over $K$ in all the variables which are needed for the polarization of $I,$ and let $I^p\subset S^\prime$ be the polarization of $I.$ We have 
$I^p=\bigcap_{F\in \Fc(\Delta)}I_F^p$, where 
\[
I_F^p=
\left\{
\begin{array}{ll}
(x_iy_i\ |\ i\notin F),& \text{ if } F\in \Fc(\Gamma),\\
P_F,& \text{ if } F\in\Fc(\Delta)\setminus \Fc(\Gamma).
\end{array}
\right.
\]
 Then $\projdim(S^\prime/I^p)=
\projdim(S/I)$. Let $N$ be the multiplicative set generated by all the variables $x_i.$ Then $I_N^p=\bigcap_{F\in \Fc(\Gamma)}(y_i\ |\ i\notin F)$ and $$\projdim(S^\prime/I^p)_N\leq \projdim(S^\prime/I^p)=\projdim(S/I).$$ This inequality implies that $\depth(K[\Gamma])\geq \depth(S/I)=t.$

(d) $\Leftrightarrow$ (e) follows immediately by applying the criterion given in Proposition~\ref{Hibi}.

(d) $\Rightarrow$ (f): We proceed by induction on $k.$ The initial inductive step is trivial. Let $k>1$ and assume that $|F_{i_1}\cap\cdots \cap F_{i_\ell}|\geq t-\ell+1$ for $1\leq \ell < k$ and for any $1\leq i_1<\cdots <i_\ell\leq r.$ Obviously, it is enough to show that 
$|F_1\cap\cdots\cap F_k|\geq t-k+1.$ By \cite[Theorem 1.1]{HPV}, we have the following exact sequence of $S$-modules: 
{\small
\begin{equation}\label{sequence}
0\to \frac{S}{\bigcap_{i=1}^k P_{F_i}}\to\Dirsum\limits_{i=1}^k \frac{S}{P_{F_i}}\to\Dirsum\limits_{1\leq i<j\leq k}\frac{S}{P_{F_i}+P_{F_j}}\to\cdots
\to\frac{S}{P_{F_1}+\cdots + P_{F_k}}\to 0.
\end{equation}}
By assumption,  $\depth(S/\bigcap_{i=1}^k P_{F_i})\geq t$. We decompose the above sequence  in $k-1$ short exact sequences as follows:
\[
0\to \frac{S}{\bigcap_{i=1}^k P_{F_i}}\to\Dirsum\limits_{i=1}^k \frac{S}{P_{F_i}}\to U_1\to 0,
\]
\[
0\to U_1\to \Dirsum\limits_{1\leq i<j\leq k}\frac{S}{P_{F_i}+P_{F_j}}\to U_2\to 0,
\]
\[
\vdots
\]
\[
0\to U_{k-2}\to \Dirsum\limits_{1\leq j_1<\cdots < j_{k-1}\leq k}\frac{S}{P_{F_{j_1}}+\cdots P_{F_{j_{k-1}}}}\to \frac{S}{P_{F_1}+\cdots + P_{F_k}}\to 0.
\]
Note that, for all $\ell$ and any $1\leq j_1<\cdots< j_\ell\leq k,$ we have $$P_{F_{j_1}}+\cdots + P_{F_{j_\ell}}=P_{F_{j_1}\cap\cdots\cap F_{j_\ell}}.$$ In particular, $S/(P_{F_{j_1}}+\cdots + P_{F_{j_\ell}})$ is Cohen-Macaulay of depth equal to $|F_{j_1}\cap\cdots\cap F_{j_\ell}|.$ Therefore, $$\depth(\Dirsum_{1\leq j_1<\cdots<j_\ell\leq k}S/(P_{F_{j_1}}+\cdots P_{F_{j_\ell}}))\geq t-\ell+1$$ for every $1\leq \ell<k$ and any $1\leq j_1<\cdots< j_\ell\leq k.$
Now, by using the inductive hypothesis and by applying Depth Lemma in the first $k-2$ above short exact sequences from top to bottom, step by step, we obtain $\depth(U_1)\geq t-1, \depth(U_2)\geq t-2,\ldots,\depth(U_{k-2})\geq t-k+2.$ Finally, by applying Depth Lemma in the last short exact sequence, since the depth of the middle term is $\geq t-k+2,$ we get $\depth(S/(P_{F_1}+\cdots +P_{F_k}))=|F_1\cap\cdots\cap F_k|\geq t-k+1.$

(f)$\Rightarrow$(d): Let $\Gamma$ be a subcomplex of $\Delta$ with $\Fc(\Gamma)=\{F_{j_1},\ldots,F_{j_k}\}\subset \Fc(\Delta)$. We have to show that 
$\depth(K[\Gamma])\geq t.$ We may obviously assume that $k<r$ and the facets of $\Gamma$ are $F_1,\ldots,F_k.$ If $k\leq t,$ then we use the short exact sequences derived from (\ref{sequence}) in the proof of (d) $\Rightarrow$ (f) and, by applying successively  Depth Lemma from bottom to the top, we get, step by step, $\depth(U_{k-2})\geq t-k+2,\ldots, \depth(U_2)\geq t-2, \depth(U_1)\geq t-1$, and, finally, from the first exact sequence, $\depth(K[\Gamma])\geq t.$ If $t<k,$ we use only the first $t$ short exact sequences, that is, we stop at
\begin{equation}\nonumber
0\to U_{t-1}\to \Dirsum\limits_{1\leq j_1<\cdots < j_{t}\leq k}\frac{S}{P_{F_{j_1}}+\cdots + P_{F_{j_t}}}\to U_t\to 0.
\end{equation}
Since the middle term in this short exact sequence has $\depth\geq 1,$ we get $\depth(U_{t-1})\geq 1.$ Next, by using the same arguments as before, we get $\depth(U_{t-2})\geq 2,\ldots,\depth(U_{1})\geq t-1$, and, finally, $\depth(K[\Delta])\geq t,$ as desired.

The implication (d) $\Rightarrow$ (a) follows by Theorem~\ref{extension}.

Finally, the implication (c) $\Rightarrow$ (e) follows  similarly to the proof of Corollary 1.9 in \cite{MT}.
\end{proof}

In order to state the first consequence of the above theorem, we need to know the behavior of the depth of a Stanley-Reisner ring over a field when passing from characteristic $0$ to characteristic $p>0.$ We show in the next lemma that the Betti numbers of the Stanley-Reisner ring  can only go up when passing from characteristic $0$ to a positive characteristic which, in particular, implies that the depth does not increase. This result is certainly known. However we include here its proof since we could not find any precise reference. The argument of the proof was communicated to the second author by Ezra Miller.

\begin{Lemma}\label{betti}
Let $\Delta$ be a simplicial complex on the vertex set $[n]$ and let $K,L$ be two fields with $\car K=0$, $\car L=p>0.$ Then
$
\beta_i(K[\Delta])\leq \beta_i(L[\Delta]) \text{ for all }i.
$
\end{Lemma}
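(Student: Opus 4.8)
The plan is to reduce the comparison of Betti numbers to a comparison of the dimensions of reduced simplicial homology groups of induced subcomplexes, and then to control the latter via the universal coefficient theorem. First I would invoke Hochster's formula (see, e.g., \cite{HH}), which computes the multigraded Betti numbers of a Stanley--Reisner ring as
\[
\beta_{i,W}(K[\Delta]) = \dim_K \tilde{H}_{|W|-i-1}(\Delta_W; K)
\]
for each $W \subseteq [n]$, where $\Delta_W$ denotes the restriction of $\Delta$ to the vertex set $W$. Summing over all $W$ with $|W|=j$ recovers the graded Betti numbers, and summing further over $j$ recovers $\beta_i(K[\Delta])$. Since this formula holds verbatim over $L$, it suffices to prove, for every $W$ and every homological degree $j$, the inequality
\[
\dim_K \tilde{H}_j(\Delta_W; K) \leq \dim_L \tilde{H}_j(\Delta_W; L).
\]

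The key observation is that simplicial homology is computed from the simplicial chain complex of $\Delta_W$ over $\ZZ$, a bounded complex of finitely generated free abelian groups, by extension of scalars. I would therefore apply the universal coefficient theorem, which for any field $k$ yields a short exact sequence
\[
0 \to \tilde{H}_j(\Delta_W; \ZZ) \tensor_{\ZZ} k \to \tilde{H}_j(\Delta_W; k) \to \Tor_1^{\ZZ}(\tilde{H}_{j-1}(\Delta_W; \ZZ), k) \to 0.
\]
When $\car k = 0$ the field $k$ is flat over $\ZZ$, so the $\Tor$ term vanishes and $\dim_K \tilde{H}_j(\Delta_W; K)$ equals the free rank $b_j := \rank_{\ZZ} \tilde{H}_j(\Delta_W; \ZZ)$, independently of the particular characteristic-zero field $K$. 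When $\car k = p > 0$, writing the integral homology as $\ZZ^{b_j}$ plus torsion, each cyclic summand of order divisible by $p$ contributes one dimension to $\tilde{H}_j(\Delta_W; \ZZ) \tensor L$ and, in the adjacent degree, one dimension to the $\Tor$ term; hence $\dim_L \tilde{H}_j(\Delta_W; L) = b_j + \tau_j + \tau_{j-1}$, where $\tau_j$ counts the summands of $\tilde{H}_j(\Delta_W; \ZZ)$ of order divisible by $p$. Since $\tau_j, \tau_{j-1} \geq 0$, this gives $\dim_L \tilde{H}_j(\Delta_W; L) \geq b_j = \dim_K \tilde{H}_j(\Delta_W; K)$, which is exactly the required inequality.

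Combining the two computations, for every $W$ and every $i$ one obtains $\beta_{i,W}(K[\Delta]) \leq \beta_{i,W}(L[\Delta])$, and summing over $W$ yields $\beta_i(K[\Delta]) \leq \beta_i(L[\Delta])$. The only substantive step is the torsion bookkeeping in the universal coefficient theorem: the heart of the matter is that passing from $\QQ$ to $\FF_p$ can never annihilate a free homology class and can only create new classes out of $p$-torsion, so the homology dimensions are monotone under this specialization (and hence from $K$ to $L$). Everything else is a formal application of Hochster's formula, which transports this pointwise inequality into the asserted inequality between the total Betti numbers; in particular the depth, being determined by the vanishing of the top Betti numbers via the Auslander--Buchsbaum equality, can only drop in positive characteristic.
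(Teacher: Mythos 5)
Your proof is correct, but it takes a genuinely different route from the paper's. You work combinatorially: Hochster's formula reduces the claim to comparing $\dim_k \tilde{H}_j(\Delta_W;k)$ for the induced subcomplexes $\Delta_W$, and the universal coefficient theorem over $\ZZ$ shows that this dimension equals the free rank $b_j$ of the integral homology in characteristic $0$ and equals $b_j+\tau_j+\tau_{j-1}$ in characteristic $p$, where $\tau_j$ counts the $p$-torsion summands; monotonicity is then immediate. The paper instead argues homologically at the level of resolutions: it first reduces to the prime fields $\QQ$ and $\FF_p$ by flatness, takes a minimal free resolution $\Fc$ of $\ZZ_{(p)}[\Delta]$ over $\ZZ_{(p)}[x_1,\ldots,x_n]$, and observes that $\Fc/p\Fc$ is a \emph{minimal} free resolution of $\FF_p[\Delta]$ (since $p$ is a nonzerodivisor, by \cite[Lemma 8.27]{MS}) while $\Fc[p^{-1}]$ is a possibly non-minimal free resolution of $\QQ[\Delta]$ of the same ranks. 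Your approach buys more precise information --- the finer multigraded inequality $\beta_{i,W}(K[\Delta])\leq\beta_{i,W}(L[\Delta])$ together with an exact formula for the defect in terms of integral torsion --- but it is tied to squarefree monomial ideals through Hochster's formula. The paper's argument is shorter once the Miller--Sturmfels lemma is available and applies verbatim to any ideal defined over $\ZZ$, not just Stanley--Reisner ideals. Two small points of care in your write-up: the reduction from arbitrary fields to prime fields (which the paper makes explicit via flatness) is implicitly handled by your universal coefficient computation, but it is worth stating that $\ZZ/q\tensor_{\ZZ}L$ and $\Tor_1^{\ZZ}(\ZZ/q,L)$ are each one-dimensional over $L$ exactly when $p$ divides $q$, for an arbitrary field $L$ of characteristic $p$; and the closing remark about depth belongs to Corollary~\ref{indep}, not to the lemma itself.
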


\begin{proof}
Any field is flat over its prime field. Therefore,  since $\car K=0,$ we have $\beta_i(K[\Delta])=\beta_i(\QQ[\Delta])$ for all $i,$ and since $\car L=p$, we have 
$\beta_i(K[\Delta])=\beta_i(\FF_p[\Delta])$ for all $i,$ where $\FF_p$ is the prime field of characteristic $p.$ In other words, the Betti numbers depend only on 
the characteristic of the base field. Let $\ZZ_p$ be  the local ring of the integers at the
prime $p$. The ring $\ZZ_p[X]$ is *local (\cite[Section 1.5]{BH}) and the Stanley-Reisner ideal $I_\Delta\subset \ZZ_p[X]$ is *homogeneous. Let 
$\Fc$ be a minimal free resolution of $\ZZ_p[\Delta]$ over $\ZZ_p[x_1,\ldots,x_n].$ Since $p$ is a nonzerodivisor on $\ZZ_p[\Delta]$, by \cite[Lemma 8.27]{MS}, the quotient 
$\Fc/p\Fc$ is a minimal free resolution of $\FF_p[\Delta]$ over $\FF_p[x_1,\ldots,x_n]$. On the other hand, the localization $\Fc[p^{-1}]$ by inverting $p$ is a free resolution, not necessarily minimal, of $\QQ[\Delta]$ over $\QQ[x_1,\ldots,x_n]$. Since the modules in $\Fc/p\Fc$ and $\Fc[p^{-1}]$ have the same ranks, it follows that $\beta_i(\QQ[\Delta])\leq \beta_i(\FF_p[\Delta])$ for all $i$ which leads to the desired inequalities. 
\end{proof}

\begin{Corollary}\label{indep}
Let $\Delta$ be a pure simplicial complex with rigid depth over a field  of characteristic $0$. Then $\Delta$ has rigid depth over any field.
\end{Corollary}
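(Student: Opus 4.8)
The plan is to reduce the whole statement to the purely combinatorial criterion (f) of Theorem~\ref{rigiddepth}, whose only dependence on the base field is through the single integer $t=\depth(K[\Delta])$. First I would note, exactly as in the opening of the proof of Lemma~\ref{betti}, that the Betti numbers of a Stanley--Reisner ring depend only on the characteristic of the base field (each field being flat over its prime field). Consequently it is enough to compare a field $K$ with $\car K=0$ and a field $L$ with $\car L=p>0$; the case of two fields of the same positive characteristic is immediate, and the comparison with characteristic $0$ then transfers rigidity to every field.

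Next I would compare the two depths. Set $t=\depth(K[\Delta])$ and $t'=\depth(L[\Delta])$. By Lemma~\ref{betti} we have $\beta_i(K[\Delta])\leq\beta_i(L[\Delta])$ for all $i$, so in particular $\projdim K[\Delta]\leq\projdim L[\Delta]$ (a nonvanishing Betti number over $K$ forces one over $L$). Applying the graded Auslander--Buchsbaum formula over $S=K[x_1,\ldots,x_n]$, namely $\depth(K[\Delta])=n-\projdim K[\Delta]$ and likewise over $L$, I conclude $t'\leq t$.

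The heart of the argument is then the monotonicity of condition (f) of Theorem~\ref{rigiddepth} as $t$ decreases. Rigid depth over $K$ is equivalent, by that theorem, to the assertion that $|F_{i_1}\cap\cdots\cap F_{i_k}|\geq t-k+1$ for all $1\leq k\leq\min\{r,t\}$ and all choices of indices $1\leq i_1<\cdots<i_k\leq r$. Since $t'\leq t$, the admissible range $1\leq k\leq\min\{r,t'\}$ is contained in $1\leq k\leq\min\{r,t\}$, and for each such $k$ the hypothesis yields $|F_{i_1}\cap\cdots\cap F_{i_k}|\geq t-k+1\geq t'-k+1$. Hence condition (f) holds with $t$ replaced by $t'$, which by Theorem~\ref{rigiddepth} applied over $L$ is precisely the statement that $\Delta$ has rigid depth over $L$.

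The only point needing real care is the last step: one must check that condition (f) genuinely weakens when the governing integer drops from $t$ to $t'$, both in the shrinking of the index range $\min\{r,t\}$ and in the slackening of the lower bounds $t-k+1$. Everything else is a bookkeeping application of Lemma~\ref{betti}, Auslander--Buchsbaum, and the field-independent form of criterion (f). I do not expect any serious obstacle beyond verifying this monotonicity, since the decisive feature is that (f) is a combinatorial condition on the facets of $\Delta$ that involves the field only through $t$.
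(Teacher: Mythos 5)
Your proposal is correct and follows essentially the same route as the paper: Lemma~\ref{betti} plus Auslander--Buchsbaum to get $\depth L[\Delta]\leq\depth K[\Delta]$, and then the observation that the combinatorial condition (f) of Theorem~\ref{rigiddepth} is monotone in $t$ (the paper leaves this last monotonicity check implicit, which you rightly spell out).
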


\begin{proof} Let $K$ be a field of characteristic $0$ and $L$ a field of characteristic $p>0.$ The above lemma implies that $\projdim K[\Delta]\leq \projdim L[\Delta].$ By Auslander-Buchsbaum formula, it follows that $\depth K[\Delta]\geq \depth L[\Delta].$ Therefore, the desired statement follows by applying the combinatorial condition (f) of Theorem~\ref{rigiddepth}.
\end{proof}

\begin{Example}{\em 
Let $\Delta$ be the six-vertex triangulation of the real projective plane; see \cite[Section 5.3]{BH}. If $\chara K\neq 2,$ 
then $\Delta$ is Cohen-Macaulay over $K$, hence $\depth(K[\Delta])=2$, and, by condition (f) of Theorem~\ref{rigiddepth}, it follows that $\Delta$ 
does not have rigid depth over $K.$ But if $\chara K=2,$ then $\depth(K[\Delta])=1,$  and, consequently, $\Delta$ has rigid depth over $K.$
}
\end{Example}

The simplicial complexes with one or two facets have rigid depth. 

\begin{Lemma}\label{2facets}
Let $\Delta$ be a pure simplicial complex with at most two facets. Then $\Delta$ has rigid depth.
\end{Lemma}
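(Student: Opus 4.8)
The plan is to reduce everything to the purely combinatorial criterion (f) of Theorem~\ref{rigiddepth} and to verify it by hand in the two cases $r=1$ and $r=2$, where $\Fc(\Delta)=\{F_1,\ldots,F_r\}$. Write $t=\depth(K[\Delta])$ and recall that, since $\Delta$ is pure of dimension $d-1$, every facet satisfies $|F_i|=d$. If $r=1$, then $K[\Delta]=S/P_{F_1}$ is a polynomial ring in $d$ variables, so $t=d=|F_1|$; here $\min\{r,t\}=1$ and the only instance of (f), namely $|F_1|\geq t$ for $k=1$, holds with equality. Thus the whole content of the lemma lies in the case $r=2$.

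For $r=2$ the main point I would establish first is the exact value $t=|F_1\cap F_2|+1$. Set $c=|F_1\cap F_2|$; since $F_1\neq F_2$ and $|F_1|=|F_2|=d$, one has $0\leq c\leq d-1$, in particular $c<d$. I would then use the $k=2$ instance of the exact sequence~(\ref{sequence}),
\[
0\to S/I_\Delta\to S/P_{F_1}\oplus S/P_{F_2}\to S/(P_{F_1}+P_{F_2})\to 0,
\]
together with the identity $P_{F_1}+P_{F_2}=P_{F_1\cap F_2}$ used in the proof of Theorem~\ref{rigiddepth}. The middle term is Cohen--Macaulay of depth $d$ and the right-hand term is Cohen--Macaulay of depth $c$. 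Running the long exact sequence in local cohomology (equivalently, applying the Depth Lemma), and using $c<d$ in an essential way, gives $H_\mm^i(S/I_\Delta)=0$ for all $i\leq c$ and $H_\mm^{c+1}(S/I_\Delta)\neq 0$; hence $t=c+1$.

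With $t=c+1$ in hand, verifying (f) is immediate. For $k=1$ we need $|F_i|\geq t$, and indeed $|F_i|=d\geq c+1=t$ because $c\leq d-1$. If $c\geq 1$ then $t\geq 2$, so $\min\{r,t\}=2$ and the case $k=2$ requires $|F_1\cap F_2|\geq t-1=c$, which holds with equality; if $c=0$ then $t=1$, $\min\{r,t\}=1$, and no condition beyond $k=1$ is imposed. In every case (f) holds, so Theorem~\ref{rigiddepth} gives that $\Delta$ has rigid depth.

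The only genuinely nonroutine step is the exact determination of $t$ in the two-facet case; once the value $t=|F_1\cap F_2|+1$ is known, the inequalities in (f) drop out with no further work. I expect the main care to be needed in reading off both the vanishing of $H_\mm^i(S/I_\Delta)$ below degree $c+1$ and its nonvanishing in degree $c+1$ from the long exact sequence, which is exactly where the strict inequality $c<d$ (coming from $F_1\neq F_2$ and purity) is used.
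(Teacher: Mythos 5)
Your proposal is correct and follows essentially the same route as the paper: the paper also reduces to criterion (f) of Theorem~\ref{rigiddepth} and determines $\depth(K[\Delta])=|F\cap G|+1$ from the short exact sequence $0\to K[\Delta]\to (S/P_F)\oplus(S/P_G)\to S/(P_F+P_G)\to 0$. Your write-up is only slightly more explicit about the $r=1$ case and about reading off the nonvanishing of $H_\mm^{c+1}$.
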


\begin{proof} We only need to consider the case of simplicial complexes with two facets since the other case is obvious. Let $\dim \Delta=d-1$ and $\Fc(\Delta)=\{F,G\}$.
We show that $\depth(K[\Delta])=t$ if and only if $|F\cap G|=t-1.$ Then the claim follows by condition (f) in Theorem~\ref{rigiddepth}.
We consider the exact sequence
\[
0\to K[\Delta]\to (S/P_F)\oplus(S/P_G)\to S/(P_F+P_G)\cong K[x_i\ |\ i\in F\cap G]\to 0.
\]
As $(S/P_F)\oplus(S/P_G)$ and $S/(P_F+P_G)$ are Cohen-Macaulay of dimensions $d$ and, respectively, $|F\cap G|$, it follows that $\depth(K[\Delta])=t$
if and only if $|F\cap G|=t-1.$
\end{proof}

\begin{Example}{\em 
Let $\Delta$ and $\Gamma$ be the simplicial complexes with $\Fc(\Delta)=\{\{1,2,3\},$ $\{1,4,5\}\}$ and $\Fc(\Gamma)=\{\{1,2,3\},\{1,3,4\}\}$. Obviously, 
by Lemma~\ref{2facets}, $\Delta$ is non-Cohen-Macaulay of rigid depth $2$, while $\Gamma$ is Cohen-Macaulay of rigid depth. 
}\end{Example}

In the sequel we investigate whether the rigid depth property is preserved by the skeletons of the simplicial complexes with rigid depth. The next example  shows that this is not the case.

\begin{Example}{\em
Let $\Delta$ be the simplicial complex on the vertex set $[8]$ with $\Fc(\Delta)=\{F,G\}$ where $F=\{1,2,3,4,5\}$ and $G=\{1,2,6,7,8\}.$ Then,
by Lemma~\ref{2facets} and its proof, it follows that $\depth(K[\Delta])=3$ and $\Delta$ has rigid depth. Let $\Delta^{(3)}$ be the $3$-dimensional skeleton of 
$\Delta$ and $\Gamma$ the subcomplex of $\Delta^{(3)}$ with the facets $G_1=\{1,2,3,5\}$ and $G_2=\{2,6,7,8\}.$ Then, again by the proof the above lemma, we get $\depth(K[\Gamma])=2$. But $\depth K[\Delta^{(3)}]=3$, thus the skeleton $\Delta^{(3)}$ of $\Delta$ does not have rigid depth since it does not satisfy condition (d) in Theorem~\ref{rigiddepth}.
}
\end{Example}

However, as an application of Theorem~\ref{rigiddepth}, we prove the following

\begin{Proposition}
Let $\Delta$ be a pure simplicial complex with rigid depth and let $t=\depth(K[\Delta])$. If $\Delta^{(i)}$ has rigid depth for some $i\geq t-1$,
then $\Delta^{(j)}$ has rigid depth for every $j\geq i.$
\end{Proposition}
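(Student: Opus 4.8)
The plan is to verify the combinatorial condition (f) of Theorem~\ref{rigiddepth} for each skeleton $\Delta^{(j)}$ with $j\geq i$, feeding in the fact that the same condition already holds for $\Delta^{(i)}$. Before that I would pin down the depth of the skeletons, since condition (f) is stated relative to a fixed depth. Writing $\dim\Delta=d-1$, I claim $\depth K[\Delta^{(j)}]=t$ for every $j$ with $t-1\leq j\leq d-1$. Indeed, Proposition~\ref{Hibi} applied to $\Delta$ shows that the largest index $\ell$ with $\Delta^{(\ell)}$ Cohen-Macaulay equals $t-1$. Since $\big(\Delta^{(j)}\big)^{(\ell)}=\Delta^{(\ell)}$ for $\ell\leq j$, a second application of Proposition~\ref{Hibi}, now to $\Delta^{(j)}$, gives $\depth K[\Delta^{(j)}]=\max\{\ell\leq j\mid \Delta^{(\ell)}\text{ is Cohen-Macaulay}\}+1=(t-1)+1=t$, using $j\geq t-1$. (For $j\geq d-1$ one simply has $\Delta^{(j)}=\Delta$, which has rigid depth by hypothesis, so nothing is to be proved there.) In particular $\Delta^{(i)},\dots,\Delta^{(d-1)}$ all have depth $t$, so for each of them ``rigid depth'' is exactly condition (f) with this common value $t$.

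Next I would record the facets. Because $\Delta$ is pure of dimension $d-1$ and $j\leq d-1$, the skeleton $\Delta^{(j)}$ is pure of dimension $j$, and $\Fc(\Delta^{(j)})$ consists precisely of the faces of $\Delta$ of cardinality $j+1$; similarly the facets of $\Delta^{(i)}$ are the faces of $\Delta$ of cardinality $i+1$. The heart of the argument is the following comparison. Fix $j$ with $i\leq j\leq d-1$ and fix distinct facets $G_1,\dots,G_k$ of $\Delta^{(j)}$ with $1\leq k\leq\min\{|\Fc(\Delta^{(j)})|,t\}$; I must show $|G_1\cap\cdots\cap G_k|\geq t-k+1$. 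For each $\ell$ I choose an arbitrary subset $H_\ell\subseteq G_\ell$ with $|H_\ell|=i+1$, which is possible since $|G_\ell|=j+1\geq i+1$. As a subset of a face of $\Delta$ of cardinality $i+1$, each $H_\ell$ is itself a facet of $\Delta^{(i)}$.

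The key point is that collisions among the $H_\ell$ only help. Let $m$ be the number of distinct sets among $H_1,\dots,H_k$. Then $m\leq k\leq t$ and $m\leq|\Fc(\Delta^{(i)})|$, so condition (f) for $\Delta^{(i)}$—available because $\Delta^{(i)}$ has rigid depth and $\depth K[\Delta^{(i)}]=t$—applies to these $m$ distinct facets and yields $|H_1\cap\cdots\cap H_k|\geq t-m+1$. On the other hand $H_\ell\subseteq G_\ell$ for every $\ell$ forces $H_1\cap\cdots\cap H_k\subseteq G_1\cap\cdots\cap G_k$, whence
\[
|G_1\cap\cdots\cap G_k|\ \geq\ |H_1\cap\cdots\cap H_k|\ \geq\ t-m+1\ \geq\ t-k+1,
\]
the last inequality because $m\leq k$. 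This is exactly condition (f) for $\Delta^{(j)}$, so by the equivalence (a)$\Leftrightarrow$(f) in Theorem~\ref{rigiddepth} the complex $\Delta^{(j)}$ has rigid depth; together with the case $j\geq d-1$ this covers all $j\geq i$.

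The step I expect to need the most care is precisely the bookkeeping around distinctness: condition (f) is phrased for a collection of \emph{distinct} facets, so one cannot blindly insert the subfaces $H_1,\dots,H_k$ if some of them coincide. The remedy above—passing to the $m\leq k$ distinct subfaces and using that the bound $t-m+1$ is \emph{decreasing} in the number of facets—is what makes the reduction go through, and it is the only place where the relation $j\geq i$ is used (merely to guarantee $|G_\ell|\geq i+1$). I would still double-check the boundary indices, namely that $m\geq 1$, that $k\leq t$ is inherited by $m$, and that the purity statements hold throughout the range $t-1\leq j\leq d-1$, but I anticipate no further obstacle.
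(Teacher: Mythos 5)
Your argument is correct, but it runs through a different equivalence of Theorem~\ref{rigiddepth} than the paper does. The paper proves the step from $\Delta^{(i)}$ to $\Delta^{(i+1)}$ by induction using condition (e): for a subcomplex $\Gamma$ of $\Delta^{(i+1)}$ with $\Fc(\Gamma)\subset\Fc(\Delta^{(i+1)})$, the skeleton $\Gamma^{(i)}$ is a facet-subcomplex of $\Delta^{(i)}$, and since $(\Gamma^{(i)})^{(t-1)}=\Gamma^{(t-1)}$ for $t-1\leq i$, the Cohen--Macaulayness of $\Gamma^{(t-1)}$ is inherited at once; this makes the inductive step essentially a one-line observation. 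You instead verify condition (f) for $\Delta^{(j)}$ directly from condition (f) for $\Delta^{(i)}$, by shrinking each facet $G_\ell$ of $\Delta^{(j)}$ to an $(i+1)$-subset $H_\ell$, which is a facet of $\Delta^{(i)}$ by purity, and then using that the bound $t-m+1$ only improves when the $H_\ell$ collide. Your handling of the distinctness issue is the right fix and the inequalities all check out; your derivation of $\depth K[\Delta^{(j)}]=t$ from Proposition~\ref{Hibi} replaces the paper's citation of \cite{JAX} and is also sound. What the paper's route buys is brevity (no bookkeeping about repeated facets, no choice of the subsets $H_\ell$); what yours buys is a purely combinatorial, non-inductive verification that goes from $i$ to an arbitrary $j\geq i$ in one step and makes explicit where purity and the hypothesis $j\geq i$ enter.
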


\begin{proof}
By \cite{JAX}, we know that $\depth(K[\Delta^{(i)}])=t$ for $i\geq t-1.$ It is enough to show that if $\Delta^{(i)}$ has rigid depth  for some $i\geq t-1,$ then $\Delta^{(i+1)}$ has the same property. 

Let $\Gamma\subset \Delta^{(i+1)}$ be a subcomplex with $\Fc(\Gamma)\subset \Fc(\Delta^{(i+1)}).$   Then $\Gamma^{(i)}$ is a subcomplex of $\Delta^{(i)}$ and $\Fc(\Gamma^{(i)})\subset \Fc(\Delta^{(i)}).$ By our assumption and by using condition (e) in Theorem~\ref{rigiddepth}, it follows that 
$\Gamma^{(t-1)}$ is Cohen-Macaulay. Therefore, $\Delta^{(i+1)}$ satisfies  condition (e) in Theorem~\ref{rigiddepth}, which ends our proof.
\end{proof}

\begin{center}
{\bf Acknowledgment}
\end{center}

We thank J\"urgen Herzog for helpful discussions on the subject of this paper and Ezra Miller for  the proof of Lemma~\ref{betti}. 
We would also like to thank the referee for his valuable suggestions to improve our paper.

\end{document}